\date{}
\title{Rational and iterated maps, degeneracy loci, \\and the generalized Riemann-Hurwitz formula}
\author{James  F. Glazebrook
and Alberto Verjovsky \thanks{This work was partially supported by PAPIIT (Universidad
Nacional Aut\'onoma de M\'exico) \#IN103914.}}
\theoremstyle{plain}
\newtheorem{proposition}{Proposition}[section]
\newtheorem{theorem}{Theorem}[section]
\newtheorem{corollary}{Corollary}[section]
\theoremstyle{definition}
\newtheorem{example}{Example}[section]
\newtheorem{remark}{Remark}[section]
\numberwithin{equation}{section}
\newcommand{\Aut}{{\rm Aut}}
\newcommand{\codim}{{\rm codim}}
\newcommand{\coker}{{\rm coker}}
\newcommand{\Fix}{{\rm Fix}}
\newcommand{\Gr}{{\rm Gr}}
\newcommand{\Hom}{{\rm Hom}}
\newcommand{\Ker}{{\rm Ker}}
\newcommand{\IM}{{\rm Im}}
\newcommand{\rank}{{\rm rank}}
\newcommand{\Res}{{\rm Res}}
\newcommand{\SO}{{\rm SO}}
\newcommand{\SU}{{\rm SU}}
\newcommand{\Sp}{{\rm Sp}}
\newcommand{\Sing}{{\rm Sing}}
\renewcommand{\a}{\alpha}
\newcommand{\be}{\beta}
\newcommand{\K}{\mathcal K}
\newcommand{\U}{\mathcal U}
\newcommand{\bC}{\mathbb{C}}
\newcommand{\bN}{\mathbb{N}}
\newcommand{\bR}{\mathbb{R}}
\newcommand{\bZ}{\mathbb{Z}}
\newcommand{\fX}{\mathfrak{X}}
\newcommand{\lra}{\longrightarrow}
\newcommand{\ovsetl}[1]{\overset {#1}{\lra}}
\newcommand{\thra}{\twoheadrightarrow}
\newcommand{\what}{\widehat}
\newcommand{\sfH}{\mathsf{H}}
\newcommand{\del}{\partial}
\newcommand{\med}{\medbreak}
\newcommand{\medn}{\medbreak \noindent}
\begin{document}
%%%%%%%%%%%%%%%%%%%%%%%%%%%%%%%%%%%%%%%%%%%%%%%%%%%%
%  \renewcommand{\theequation}{A-\arabic{equation}}
%  % redefine the command that creates the equation no.
%  \setcounter{equation}{0}  % reset counter
%  \section*{APPENDIX}  % use *-form to suppress numbering

\maketitle
\begin{center}\emph{Dedicated to Jos\'e Seade on his 60th birthday}
\end{center}

\begin{abstract}
We consider a generalized Riemann-Hurwitz formula as it may be applied to rational maps between
projective varieties having an indeterminacy set and fold-like singularities. The case of a
holomorphic branched covering map is recalled. Then we see how the formula can be applied to iterated
maps having branch-like singularities, degree lowering curves, and holomorphic maps having a fixed point set.
Separately, we consider a further application involving the Chern classes
of determinantal varieties when the latter are realized as the degeneracy loci of certain vector bundle morphisms.
\end{abstract}

\medbreak
\textbf{Mathematics Subject Classification (2010)}: 57M12 32C10 57R19 32H50

\med
\textbf{Keywords}: Riemann-Hurwitz formula, rational maps, iterated maps, degeneracy locus, determinantal variety.

%\tableofcontents

%%%%%%%%%%%%%%%%%%%%%%%%%%%%%%%%%%%%%%%%%%%%%%%%%%%%%%%%%%%%%%%%%%%%%%%%%%%%%%%%%%%%%%%

\section{Introduction}\label{introduction}

The two-fold aim of this paper is firstly to consider (generalized) higher dimensional versions of the classical Riemann-Hurwitz formula
as initially applied to rational maps of complex projective varieties
$f:X \lra Y$, where $X$ and $Y$ have the same complex dimension. The main
results presented here (Theorem \ref{main-1} and Theorem
\ref{main-2}) are derived from the general setting of \cite{GGV}
formulated mainly in
the category of CW-complexes, and then applying the basics of the topological theory of characteristic classes of $G$-bundles (for suitable groups $G$) and associated vector bundles.
Other versions of a generalized
Riemann-Hurwitz formula, such as in the differentiable category, had previously
been obtained in \cite{Brasselet1,Schwartz1,Vanque}. In this first part we
will be applying the main results to operations involving rational
maps, as for instance, realized in various algebraic-geometric and complex-dynamical constructions (beyond, that is, the familiar case of holomorphic
branched covering maps). Applications and examples include:
\begin{itemize}
\item[(1)] Iterates of rational maps $f: X \lra Y$ \cite{Bonifant1}(cf. \cite{Fornaess1}).

\item[(2)] Rational self-maps $f: X \lra X$,  with respect to their
  fixed point sets \cite{Abate1}, and degree lowering curves \cite{Fornaess1}.
\end{itemize}
In the second part, we will apply those same main results and the allied constructions to the study of
determinantal varieties when the latter are realized as the degeneracy loci of morphisms
$\psi: E \lra F$,
of complex vector bundles over $X$. From a general formula established in
\S\ref{degeneracy-locus}, we pay attention to two particular cases:
general symmetric bundle maps \cite{HT} and flagged bundles \cite{Fulton1}.

Throughout, the formulas are established in terms of Chern classes, as by now the traditional method
for studying invariants in the algebraic-geometric category. Specifically, such formulas
regulate the necessary topological
conditions for the existence of a given class of rational maps (or morphisms), just as the
classical Riemann-Hurwitz formula applies when studying holomorphic maps of
algebraic curves (viz. compact Riemann surfaces).

The construction and main results of \cite{GGV} were adapted in \cite{GV1} to cover the case of generalized monoidal transformations.
A further work \cite{GV2} is intended to further elaborate on this construction, as well as to bring into focus a number of results obtained
by other authors concerning the blowing-up process of (singular) Chern classes following the original study undertaken by I. R. Porteous \cite{Porteous1,Porteous2}
and R. Thom \cite{Th} (see also
\cite{Aluffi,Gitler1}).

%%%%%%%%%%%%%%%%%%%%%%%%%%%%%%%%%%%%%%%%%%%%%%%%%%%%%%%%%%%%%%%%%%%%%%%%%%%%%%%%%%%%%%%%%%%

\section{The topological background}\label{topological}

Given a topological group $G$, we start by recalling from \cite{GGV} a general result valid in the characteristic ring of $G$-bundles when defined initially within the category of CW-complexes.

\subsection{Adapted pairs}\label{adapted}

Following \cite{GGV}, let $\Lambda$ be a given (commutative) coefficient ring and $(M,M_1)$ a pair of CW-complexes, with $M$ of dimension $n$ and $M_1$ a subcomplex of codimension $r \geq 2$, so that $H_q(M, \Lambda) = 0$ for $q > n$, and $H_q(M_1, \Lambda) = 0$ for $q \geq n-1$. Then the pair $(M, M_1)$ is called $(n, \Lambda)$-\emph{adapted} if $H_n(M, \Lambda) \cong \Lambda$, and the following condition holds:
there exists a neighborhood $N(M_1)$ of $M_1$, such that $M_1$ is a deformation retract of the interior $N^{0}(M_1)$ of $N(M_1)$, and the inclusion map $p: M \lra (M, M-N^{0}(M_1))$ induces an isomorphism
\begin{equation}
    p_{*}: H_n(M) \ovsetl{\cong} H_n(M, M-N^{0}(M_1)).
\end{equation}

Having defined an adapted pair we next form the subspace $\K$ of $M \times I$, where
\begin{equation}
\K = (M \times \del I) \cup ((M, M-N^{0}(M_1)) \times I),
\end{equation}
together with the double $S(M_1) \subset \K$, given by
\begin{equation}
S(M_1) = (\del N(M_1) \times I) \cup (N(M_1) \times \del I).
\end{equation}
Now let
$$
\K_1 = (M \times \{0\}) \cup ((M, M-N^{0}(M_1)) \times [0, \frac{3}{4}],
$$
and
$$
\K_2 = (M \times \{1\}) \cup ((M, M-N^{0}(M_1)) \times [0, \frac{1}{4}],
$$
and let $S(M_1)_i = S(M_1) \cap \K_i$, for $i=1,2$.

By this construction, the spaces $\K_i$ are homotopically equivalent to $M_1$ and the spaces $\K/\K_1$ and $S(M_1)/S(M_1)_1$, are
both homotopically equivalent to the generalized Thom space $M /(M - N^{0}(M_1))$.

It follows from the cofibration
\begin{equation}
S(M_1)_1 \lra S(M_1) \lra S(M_1)/S(M_1)_1
\end{equation}
and the above definition that
\begin{equation}
H_n(S(M_1)_1, \Lambda) \cong  H_n(S(M_1)/S(M_1)_1) \cong \Lambda.
\end{equation}
A choice of generators for $H_n(M, \Lambda)$ and $H_n(S(M_1), \Lambda)$ will be called an orientation or a fundamental class $[M]$ of $M$ and $[S(M_1)]$ of $S(M_1)$, respectively.

\begin{remark}\label{top-example}
Observe that the conditions defining an $(n, \Lambda)$-adapted pair above are immediately satisfied when $M$ is a closed (compact without boundary) connected orientable $n$-manifold, and $M_1$ is a closed connected and orientable submanifold of codimension $r \geq 2$, with $\Lambda$
 any coefficient ring. This example also applies to topological, PL, as well as smooth (sub)manifolds, with $S(M_1)$ a corresponding normal sphere bundle
 (also closed, connected and orientable for $H_n(S(M_1), \Lambda)  \cong \Lambda$, given the above topological type of $M_1$).

More generally, $M$ could be considered as an
orientable simple $n$-circuit (or `triangulated pseudomanifold' in the sense of \cite{Goresky}; see also \cite{ES,Lefschetz1}), and $M_1$ an arbitrary subcomplex of codimension $r \geq 2$, such that $(M, M_1)$ satisfies the conditions of an $(n, \Lambda)$-adapted pair.
\end{remark}

Let $BG$ denote the classifying space of the topological group $G$. If $P \in H^q(BG, \Lambda)$ is a cohomology class, and $E \lra X$ is a $G$-bundle over a space $X$, then we shall denote by $P(E) \in H^q(X, \Lambda)$ the class defined by the characteristic class $P(E) = \Phi^*_E(P)$ where $\Phi_E: X \lra BG$ is the classifying map (for the basic details see e.g. \cite{BT,Chern,Hirzebruch}).

If $\tau \in H_q(X, \Lambda)$, then we denote the Kronecker pairing by $\langle P(E), \tau \rangle$, which by definition is zero in the case where the cohomological degree of $P(E)$ is different from the homological degree (or dimension) of the cycle $\tau$.

We have then the general result
from \cite[Theorem 1.1]{GGV}:
\begin{theorem}\label{main-1}
Suppose $(M,M_1)$ is an $(n, \Lambda)$-adapted pair and $E, F$ are $G$-bundles over $M$, such that on $M-M_1$ there
exists a homotopy
\begin{equation}
\theta: \Phi_E \vert_{M - M_1} \sim \Phi_F \vert_{M - M_1}.
\end{equation}
Then there exists a $G$-bundle $\xi_{\theta} \lra S(M_1)$ and orientations $[M]$ and $[S(M_1)]$, such that for any class $P \in H^n(BG,\Lambda)$, we have the following equality of Kronecker pairings as established in \cite[Th. (1.1)]{GGV}:
\begin{equation}\label{pairing-1}
\langle P(E) - P(F), [M] \rangle = \langle P(\xi_{\theta}), [S(M_1)] \rangle .
\end{equation}
\end{theorem}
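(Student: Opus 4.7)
The plan is to build a single $G$-bundle $\widetilde{E}\to\K$ from the data $(E,F,\theta)$ and then read off the Kronecker pairing of its top characteristic class $P(\widetilde{E})$ in two different ways: one giving $\langle P(E)-P(F),[M]\rangle$ and one giving $\langle P(\xi_\theta),[S(M_1)]\rangle$.

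First I would construct $\widetilde{E}$. The homotopy $\theta$ and the classifying maps $\Phi_E,\Phi_F$ combine into a continuous map $\widetilde{\Phi}:\K\to BG$: take $\widetilde{\Phi}=\Phi_E$ on the $t=0$ copy of $M$, $\widetilde{\Phi}=\Phi_F$ on the $t=1$ copy, and $\widetilde{\Phi}=\theta$ on the cylindrical piece over $M-N^{0}(M_1)$. Compatibility on the overlap $(M-N^{0}(M_1))\times\partial I$ is precisely the condition that $\theta$ is a homotopy between $\Phi_E|_{M-M_1}$ and $\Phi_F|_{M-M_1}$. Letting $\widetilde{E}$ be the pullback along $\widetilde{\Phi}$ of the universal $G$-bundle produces a bundle whose restrictions to $M\times\{0\}$, $M\times\{1\}$, and $S(M_1)\subset\K$ are $E$, $F$, and the sought bundle $\xi_\theta$, respectively.

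Next I would fix compatible orientations and perform the two evaluations. The adapted-pair condition supplies the isomorphism $p_{*}:H_n(M)\to H_n(M,M-N^{0}(M_1))$, and the quotient construction yields the chain of homotopy equivalences $\K/\K_1\simeq S(M_1)/S(M_1)_1\simeq M/(M-N^{0}(M_1))$ recorded in Section~\ref{adapted}. These canonically identify the relevant top homology groups with $\Lambda$, so a choice of $[M]$ determines a generator $[S(M_1)]\in H_n(S(M_1),\Lambda)$ through this chain. Restricting $\widetilde{E}$ along $M\times\partial I\hookrightarrow\K$ and unwinding the identifications, the Kronecker pairing of $P(\widetilde{E})$ with the generator of $H_n(\K,\K_1)$ equals $\langle P(E),[M]\rangle-\langle P(F),[M]\rangle$, the minus sign reflecting the opposite orientations of $M\times\{0\}$ and $M\times\{1\}$ as boundary components of the cylinder $M\times I$. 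Restricting instead along $S(M_1)\hookrightarrow\K$ and using that $S(M_1)/S(M_1)_1\to\K/\K_1$ is a homotopy equivalence, the same pairing equals $\langle P(\xi_\theta),[S(M_1)]\rangle$. Equating the two evaluations gives \eqref{pairing-1}.

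The main technical point is the diagram chase showing that restriction through $M\times\partial I$ produces the signed combination $P(E)-P(F)$; this is a consequence of the long exact sequence of $(\K,\K_1)$, the vanishing of $H_{*}(\K_i)\cong H_{*}(M_1)$ in the relevant top degrees (which uses $\codim M_1\geq 2$), and the explicit identification of $\widetilde{E}$ with $E$ and $F$ on the two boundary copies of $M$. Once orientations propagate consistently from $[M]$, no ambiguity remains and the theorem follows.
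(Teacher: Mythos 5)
Your proposal takes the route that the paper itself intends: the theorem is quoted from \cite[Th. 1.1]{GGV} without proof, but \S\ref{adapted} sets up exactly the scaffolding you use, namely the space $\K$, the subspaces $\K_i$ and $S(M_1)_i$, and the homotopy equivalences $\K/\K_1\simeq S(M_1)/S(M_1)_1\simeq M/(M-N^{0}(M_1))$. Your construction of $\widetilde{E}\lra\K$ by gluing $\Phi_E$, $\Phi_F$ and $\theta$, and the identification of its restrictions to the two ends and to $S(M_1)$ with $E$, $F$ and $\xi_\theta$, are correct and are the standard difference-bundle argument; the strategy of evaluating $P(\widetilde{E})$ in two ways is the right one.

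There is, however, a gap at the step you yourself single out as the main technical point. An absolute class $P(\widetilde{E})\in H^{n}(\K)$ cannot be paired with a generator of the relative group $H_n(\K,\K_1)$, and the justification you offer --- vanishing of $H_{*}(\K_i)\cong H_{*}(M_1)$ in top degrees --- is not available: $\K_1$ deformation retracts onto $M\times\{0\}$, so $H_n(\K_1)\cong\Lambda$ (it is $S(M_1)_i$, not $\K_i$, that is homotopy equivalent to $M_1$; the paper's sentence asserting the latter for $\K_i$ is a misprint, as the cofibration argument for $S(M_1)$ makes clear). Consequently $H_n(\K)\lra H_n(\K,\K_1)$ kills the image of $[M\times\{0\}]$, on which $P(\widetilde{E})$ evaluates to $\langle P(E),[M]\rangle\neq 0$ in general, so a lift of the relative generator to $H_n(\K)$ is ambiguous in a way that changes the pairing; symmetrically, $P(\widetilde{E})$ does not lift to $H^{n}(\K,\K_1)$ because its restriction to $\K_1$ is $P(E)$. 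Two standard repairs: (i) prove directly that $[M\times\{0\}]-[M\times\{1\}]$ and $[S(M_1)]$ define the \emph{same} class in $H_n(\K)$ (their difference is carried by the boundary of $(M-N^{0}(M_1))\times I$; in the general CW setting this is precisely where the adapted-pair isomorphism $p_{*}$ enters), and then pair $P(\widetilde{E})$ with that single absolute class, restricting to $M\times\del I$ and to $S(M_1)$ respectively; or (ii) pass to the relative difference class $P(\widetilde{E})-\pi^{*}P(F)\in H^{n}(\K,\K_2)$, where $\pi:\K\lra M$ is the projection, whose restriction to $S(M_1)$ differs from $P(\xi_\theta)$ by a class pulled back from $M_1$ and hence pairs to zero with $[S(M_1)]$ since $H_n(M_1,\Lambda)=0$. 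With either repair your argument closes and coincides with the cited proof.
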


\begin{remark}
An analogous result in the context of generalized monoidal transformations was given in \cite{GV1}.
\end{remark}

\subsection{The clutching construction}\label{clutching}

We now give a more explicit construction of the $G$-bundle $\xi_{\theta}
\lra S(M_1)$ which can be used for the applications which are to follow.

Suppose now that the $n$-adapted pair $(M,M_1)$ is a pair of closed, connected and oriented manifolds of dimension $n$, respectively of codimension $r \geq 2$ (as in Remark \ref{top-example}; again, this could be taken in the topological, PL, or smooth context). Also, without much loss of
generality, we suppose that $G$ is one of the Lie groups $\SU(\ell), \SO(\ell)$ or
$\Sp(\ell)$, corresponding to a complex, real oriented or symplectic vector bundle of rank $\ell$, respectively. We may also consider $G = {\rm{O}}(\ell)$ and real vector bundles of rank $\ell$ when $\Lambda = \bZ_2$.
For ease of notation, we retain $E$ and
$F$ to denote the associated vector bundles to those $G$-bundles as
previously.

Suppose now we have a homomorphism $\psi: E \lra F$, such that
\begin{itemize}
\item[i)] $\psi:E \vert_{M- M_1} \ovsetl{\cong} F\vert_{M - M_1}$

\item[ii)] $\psi\vert_{M_1}$ has constant rank.
\end{itemize}
Here $\psi$ is viewed as a `clutching function' \cite{Atiyah1,Karoubi} used to clutch $E$ and $F$ over $S(M_1)$ (cf. \cite{Vanque}).
With these assumptions, we have then the following exact sequence of vector bundles on $M_1$:
\begin{equation}
0 \lra K_1 \lra E\vert_{M_1} \ovsetl{\psi} F \vert_{M_1} \lra K_2 \lra 0,
\end{equation}
where $K_1 \cong \ker \psi$,  and $K_2 \cong \coker ~\psi$. Further, let
\begin{equation}\label{ell-def}
L := \psi(E\vert_{M_1}) \subset F\vert_{M_1}.
\end{equation}
From this we may exhibit on $S(M_1)$ vector bundle isomorphisms for the clutched bundle as given by
\begin{equation}
E \vert_{M_1} \cong K_1 \oplus L,~ \text{and}~ F \vert_{M_1} \cong K_2 \oplus L.
\end{equation}
In the development of ideas that follow, the vector bundle $L$ in \eqref{ell-def} along with its characteristic classes
will be essential objects for producing formulas that will specialize the righthand side of \eqref{pairing-1}.

In this setting, a geometric realization of $S(M_1)$ can be given as follows. Let $B(M_1)$ be a closed tubular neighborhood of $M_1$. For $i=1,2$, let $B_i(M_1)$ be two distinct copies of $B(M_1)$.
Identifying $B_1(M_1)$ and $B_2(M_1)$ along their common boundary $\del B(M_1) = \del B_1(M_1) = \del B_2(M_1)$, then $S(M_1)$ may be realized as the `double' $S(M_1)$ obtained by setting
\begin{equation}\label{double-1}
S(M_1) = B_1(M_1) \cup_{\del B(M_1)} B_2(M_1).
\end{equation}
This induces a closed $S^{r}$-fibration $q:S(M_1)
\lra M_1$, with restriction maps
\begin{equation}\label{split-proj}
q_i = q\vert_{B_i(M_1)}: B_i(M_1) \lra M_1,
\end{equation}
that can be seen to be deformation retracts (see e.g. \cite{Atiyah1,Karoubi}).

Observe that $\psi: E \lra F$ is a isomorphism when restricted to the common boundary
$\del B_1(M_1) = \del B_2(M_1)$.

As in \cite{Vanque}, we may exhibit a vector bundle isomorphism
\begin{equation}
\xi= (E, \psi, F) \cong (q_1^*K_1, \eta, q_2^* K_2) \oplus q^*L ,
\end{equation}
where $\eta$ is the appropriate clutching function. For ease of notation, let us set the clutched bundle $(q_1^*K_1, \eta, q_2^* K_2) = K$, so that $\xi$ is expressed as the direct sum
\begin{equation}\label{xi-bundle}
\xi = K \oplus q^*L .
\end{equation}

%%%%%%%%%%%%%%%%%%%%%%%%%%%%%%%%%%%%%%%%%%%%%%%%%%%%%%%%%%%%%%%%%%%%%%%%%%

\subsection{The result in the characteristic ring}

Observe that given an oriented sphere bundle $q: S(M_1) \lra M_1$ with fibre $S^r$ as above, we have the associated (long exact) Gysin sequence (see e.g. \cite{BT,Hirzebruch})
\begin{equation}\label{gysin}
\ldots H^i(S(M_1)) \ovsetl{q_{*}} H^{i-r}(M_1) \ovsetl{\cup e} H^{i+1}(M_1) \ovsetl{q^*} H^{i+1}(S(M_1)) \ldots
\end{equation}
in which the maps $q_{*}$, $\cup e$, and $q^*$ are integration along the fibre, the product with the Euler class, and the natural pull-back, respectively.

For a given vector bundle $E$ of rank $\ell$ over $M$ and with the corresponding characteristic class
$P = P_E \in H^*(BG, \Lambda)$, for $G = \U(\ell), \SO(\ell)$ or $\Sp(\ell)$, typically one considers the image $\Phi^*_E(P)$ in $H^*(M, \Lambda)$.
Recall (from e.g. \cite{BT,Hirzebruch}) that $H^*(BG, \Lambda)$ is a polynomial ring in the Chern classes (for complex vector bundles, with $\Lambda = \bZ$); in the Pontrjagin classes and Euler class (for real oriented vector bundles, with $\Lambda = \bZ[ \frac{1}{2}]$); or the corresponding Pontrjagin classes (for symplectic vector bundles, with $\Lambda = \bZ$), and in the Stiefel-Whitney classes (for real vector bundles, with $\Lambda = \bZ_2$).

A differentiable version of the following result was established \cite{Vanque} and stated explicitly in terms of the Chern forms of a principal ${\rm{U}}(q)$-bundle
(for some $q$).

\begin{theorem}\label{main-2}
Let $K$ and $L$ be as in \eqref{xi-bundle}, with $q: S(M_1) \lra M_1$ the $r$-sphere bundle as above.
With respect to the clutched bundle
$\xi = (E, \psi, F)$ in \eqref{xi-bundle} and the classifying map $\Phi$, suppose 1) there exists a splitting
$\Phi_{\xi}^*(P) = \Phi^*_{K}(P) \cup \Phi^*_{q^* L}(P)$, and 2) the cohomological degree of $P_K \leq \rank(K) = r$.
Then we have the following equality in characteristic numbers
\begin{equation}\label{characteristic-1}
\langle \Phi_{\xi}^*(P), [S(M_1)] \rangle = \langle \Phi^*_{K}(P) \cup \Phi^*_{q^* L}(P), [S(M_1)] \rangle = k \langle \Phi^*_L(P), [M_1] \rangle,
\end{equation}
and hence
\begin{equation}\label{characteristic-2}
\langle P(E)  - P(F), [M] \rangle = \langle P(\xi), [S(M_1)] \rangle = k \langle P(L), [M_1] \rangle,
\end{equation}
for some constant $k \in \Lambda$.
\end{theorem}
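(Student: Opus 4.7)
The plan is to (i) apply Theorem \ref{main-1} to obtain the first equality of \eqref{characteristic-2}, and then (ii) reduce the right-hand side to a multiple of $\langle P(L),[M_1]\rangle$ by pushing $\Phi^*_\xi(P)$ along the fibres of the oriented $r$-sphere bundle $q: S(M_1)\to M_1$ via the Gysin machinery of \eqref{gysin}.

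For step (i), observe that $\psi:E\to F$ is a vector bundle isomorphism on $M - M_1$, hence (after choosing Hermitian/orthogonal/symplectic structures compatible with $G$) it provides a homotopy $\theta$ between $\Phi_E|_{M-M_1}$ and $\Phi_F|_{M-M_1}$. The clutching construction of Section \ref{clutching} identifies the abstract $\xi_\theta$ of Theorem \ref{main-1} with the bundle $\xi$ of \eqref{xi-bundle}, so Theorem \ref{main-1} directly yields $\langle P(E)-P(F),[M]\rangle = \langle P(\xi),[S(M_1)]\rangle$.

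For step (ii), hypothesis (1) together with the naturality relation $\Phi^*_{q^*L}(P) = q^*\Phi^*_L(P)$ gives $\Phi^*_\xi(P) = \Phi^*_K(P)\cup q^*\Phi^*_L(P)$. Combining the projection (push-pull) formula $q_*\!\bigl(\Phi^*_K(P)\cup q^*\Phi^*_L(P)\bigr) = q_*\Phi^*_K(P)\cup \Phi^*_L(P)$ for the Gysin map of \eqref{gysin} with the identity $\langle \alpha,[S(M_1)]\rangle = \langle q_*\alpha,[M_1]\rangle$, one obtains
\begin{equation*}
\langle \Phi^*_\xi(P),[S(M_1)]\rangle = \langle q_*\Phi^*_K(P)\cup \Phi^*_L(P),[M_1]\rangle .
\end{equation*}
By hypothesis (2), $q_*\Phi^*_K(P) \in H^{\deg P_K - r}(M_1,\Lambda)$ with $\deg P_K - r \leq 0$. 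If $\deg P_K = r$, then since $M_1$ is connected, $q_*\Phi^*_K(P) \in H^0(M_1,\Lambda)\cong\Lambda$ and is by definition the scalar $k$. If $\deg P_K < r$, then $q_*\Phi^*_K(P)=0$ and we set $k=0$; dimensionally $\deg P_L > \dim M_1$, so $\Phi^*_L(P)=0$ and both sides of \eqref{characteristic-1} vanish. In either case \eqref{characteristic-1} holds, and combined with step (i) this gives \eqref{characteristic-2}.

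The only delicate point is the orientation bookkeeping: one has to check that the $S^r$-orientation used to define the Gysin map $q_*$, inherited from the canonical orientation of the normal bundle of $M_1\subset M$, is the one that implements the pairing identity $\langle \alpha,[S(M_1)]\rangle = \langle q_*\alpha,[M_1]\rangle$ with respect to the fundamental class $[S(M_1)]$ produced by the double \eqref{double-1} and used in Theorem \ref{main-1}. Once this is settled, the constant $k$ depends only on the characteristic class $P_K$ of the clutched kernel/cokernel bundle and not on $L$, and the argument is otherwise purely formal.
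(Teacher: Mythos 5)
Your proposal is correct and follows the same overall strategy as the paper: the first equality of \eqref{characteristic-2} is quoted from Theorem \ref{main-1} via the identification of $\xi_\theta$ with the clutched bundle $\xi$, and the second is obtained from the splitting hypothesis together with the projection formula $q_*(q^*\alpha\cup\beta)=\alpha\cup q_*\beta$ for the Gysin map of \eqref{gysin}. The one place where you genuinely diverge is in establishing that the coefficient $k$ is a constant: the paper identifies $k$ as the fibrewise characteristic number $\langle \Phi^*_{K_x}(P),[S(M_1)_x]\rangle$ and spends most of its proof showing this is independent of $x$, by trivializing $K_1$ and $K_2$ along a path $c(x,y)$ in $M_1$, deducing that the clutching functions $\eta_x$ and $\eta_y$ are homotopic, hence that $K_x\cong K_y$ as bundles on $S^r$. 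You instead observe that hypothesis (2) forces $q_*\Phi^*_K(P)\in H^{\deg P_K-r}(M_1,\Lambda)\subseteq H^{\le 0}(M_1,\Lambda)$, so that on the connected space $M_1$ it is literally a scalar (or zero). The two arguments encode the same fact --- the degree-zero class $q_*\Phi^*_K(P)$ evaluated at $x$ is exactly the paper's fibre integral --- but yours is more economical, and it has the additional merit of explicitly disposing of the degenerate case $\deg P_K<r$, where both sides of \eqref{characteristic-1} vanish for degree reasons, a case the paper's proof passes over in silence. Your closing caveat about matching the orientation conventions of $[S(M_1)]$ with those defining $q_*$ is well taken and is likewise left implicit in the paper.
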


\begin{proof}
Let $\a \in \Phi^*_L(P)$, and $\be \in \Phi^*_K(P)$. Then if $q_{*}$ and $q^{*}$ are the maps in \eqref{gysin}, we have via fibre-integration along $S(M_1) \vert_{x \in M_1}$, the equality $q_{*}(q^*(\a) \cup \be) = \a \cup q_{*}(\be)$, which on integrating over $M_1$, yields
\begin{equation}\label{product-1}
\langle \Phi^*_L(P)\langle \Phi^*_K(P), [S(M_1)_x] \rangle, [M_1] \rangle = \langle \Phi^*_K(P), [S(M_1)_x] \rangle \langle \Phi^*_L(P), [M_1] \rangle.
\end{equation}
Now $\langle \Phi^*_K(P), [S(M_1)_x] \rangle = \langle \Phi^*_{K_x}(P), [S(M_1)_x] \rangle$, where $K_x = (q_1^* K_1\vert_{B_1(M_1)_x}, \eta_x,
q_2^* K_2\vert_{B_2(M_1)_x})$ is the vector bundle over $S(M_1)\vert_{x \in M_1}$ constructed via the transition function $\eta_x$ seen as the restriction
of $\eta$ to $\del B(M_1)_x$. That is, we have an isomorphism
\begin{equation}
\eta_x: q^*_1 K_1 \vert_{\del B(M_1)_x} \ovsetl{\cong} q^*_2 K_2
\vert_{\del B(M_1)_x}.
\end{equation}
If $c(x,y)$ is a curve in $M_1$ joining two points $x,y \in M_1$, then the restrictions $K_1\vert_{c(x,y)}$, and $K_2\vert_{c(x,y)}$ are trivial. We have then the following diagram in which the vertical maps are isomorphisms
\begin{equation}
\begin{CD}
q_1^*K_1 \vert_{\del B(M_1)_x}   @> \eta_x >>  q_2^*K_2 \vert_{\del B(M_1)_x} \\
@V \cong VV   @VV \cong V     \\ q_1^*K_1 \vert_{\del B(M_1)_y}      @> \eta_y>> q_2^*K_2 \vert_{\del B(M_1)_y}
\end{CD}
\end{equation}
and modulo these isomorphisms, $\eta_x$, and $\eta_y$ are homotopic. Thus $K_x$ and $K_y$ regarded as bundles on $S^r \cong S(M_1)_x \cong S(M_1)_y$, are isomorphic. Since $M_1$ is connected, this implies $\langle \Phi^*_{K_x}(P),  [S(M_1)_x] \rangle $ is a constant, $k$, say, independent of $x$ from which \eqref{characteristic-1} follows. Then \eqref{characteristic-2} follows by \eqref{pairing-1}.
\end{proof}

In particular, if $\Phi^* P(~)= e(~)$ is the Euler class of a real oriented vector bundle, assumptions 1) and 2) in Theorem \ref{main-2} are satisfied, and we obtain as in \cite{Vanque}:
\begin{corollary}\label{euler-lemma}
For $\xi$ as defined above, we have in terms of Euler classes
\begin{equation}\label{euler-class}
\langle e(E) - e(F), [M] \rangle = \langle e(\xi), [S(M_1)] \rangle = k \langle e (L), [M_1] \rangle,
\end{equation}
where $k \in \bZ$ is a constant.
\end{corollary}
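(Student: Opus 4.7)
The plan is to obtain Corollary \ref{euler-lemma} as an immediate specialization of Theorem \ref{main-2} to the characteristic class $P=e$, the Euler class of a real oriented vector bundle. The work reduces to checking that the two structural hypotheses of Theorem \ref{main-2} are built into the Euler class itself, so that no further hypothesis on $\xi$ or on $(M,M_1)$ is needed.

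First I would verify hypothesis 1) of Theorem \ref{main-2}, namely the multiplicative splitting of $\Phi_{\xi}^{*}(P)$ along the decomposition $\xi = K \oplus q^{*}L$ of \eqref{xi-bundle}. For the Euler class this is the standard Whitney product identity $e(V\oplus W)=e(V)\cup e(W)$ for real oriented vector bundles, combined with naturality $e(q^{*}L)=q^{*}e(L)$. Thus
\begin{equation*}
\Phi_{\xi}^{*}(e)\;=\;e(\xi)\;=\;e(K)\cup e(q^{*}L)\;=\;\Phi_{K}^{*}(e)\cup\Phi_{q^{*}L}^{*}(e),
\end{equation*}
which is hypothesis 1).

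Next I would verify hypothesis 2), the degree condition on $P_{K}$. The Euler class of a real oriented rank-$\ell$ bundle has cohomological degree exactly $\ell$. Since $\dim S(M_1)=n$ and $\dim M_1 = n-r$, a non-trivial pairing $\langle e(L),[M_1]\rangle$ forces $\rank L = n-r$, hence $\rank K = \rank \xi - \rank L = r$. So $\deg e(K)=r=\rank(K)$, exactly matching hypothesis 2). (In the degenerate ranks where this balance fails, both sides of \eqref{euler-class} vanish automatically for degree reasons and the statement holds trivially with $k=0$.)

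With both hypotheses in place, Theorem \ref{main-2} applies directly and gives
\begin{equation*}
\langle e(E)-e(F),[M]\rangle \;=\; \langle e(\xi),[S(M_1)]\rangle \;=\; k\,\langle e(L),[M_1]\rangle .
\end{equation*}
The constant is identified from the proof of Theorem \ref{main-2} as the fibrewise pairing $k=\langle e(K_{x}),[S(M_1)_{x}]\rangle$, i.e.\ the Euler number of the clutched rank-$r$ oriented bundle $K_{x}$ over the fibre $S(M_1)_{x}\cong S^{r}$; this Euler number lies in $H^{r}(S^{r},\bZ)\cong\bZ$, giving $k\in\bZ$ as asserted. I do not foresee a substantive obstacle: everything is forced by dimension/degree bookkeeping and the multiplicativity of $e$. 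The only point requiring a moment of care is the integrality of $k$, which is ensured by working with the integral Euler class rather than with its real-coefficient representatives.
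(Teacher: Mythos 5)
Your proposal is correct and follows essentially the same route as the paper: the authors likewise obtain the corollary as the specialization $P=e$ of Theorem \ref{main-2}, noting (as you verify) that multiplicativity of the Euler class gives hypothesis 1) and that $\deg e(K)=\rank K=r$ gives hypothesis 2), with $k$ the fibrewise pairing $\langle e(K_x),[S(M_1)_x]\rangle\in\bZ$. The paper's displayed proof merely spells out the same content as an explicit Whitney-product/projection-formula computation along the Gysin maps, so there is no substantive difference.
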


\begin{proof}
It is instructive to include the straightforward proof from \cite[Lemma1.2]{GGV} which implements the maps in \eqref{gysin} (cf. \cite{Vanque}).
Starting from \eqref{xi-bundle}, we have
$$
\begin{aligned}
\langle e(\xi), [S(M_1)] \rangle &= \langle e(q^*L) \cup e(K), [S(M_1)] \rangle \\
&= \langle q^* e(L), e(K) \cap [S(M_1)] \rangle \\
&= \langle e(L), q_{*}(e(K) \cap[S(M_1)]) \rangle \\
&= k \langle e(L), [M_1] \rangle.
\end{aligned}
$$
\end{proof}

\begin{remark}\label{hypothesis-remark}
The applications in the following sections of this paper mainly involve complex vector bundles (so that $\Lambda = \bZ$), with $P$ corresponding to the total (or top) Chern class $c_{*}$ (or $c_{top}$), so that both assumptions 1) and 2) in Theorem \ref{main-2} are satisfied (with $r$ the rank of $L$ as a complex vector bundle), so that $M_1$ is of real codimension $2r$, with cohomological degree $c_{*} (L) \leq 2r$.
\end{remark}

%%%%%%%%%%%%%%%%%%%%%%%%%%%%%%%%%%%%%%%%%%%%%%%%%%%%%%%%%%%%%%%%%%%%%%%%%%%%%%%%%%%%%%%%

\section{Rational maps of projective varieties}\label{rational-maps-1}

In the following, we shall be applying the general construction and results of \S\ref{topological} in the category of complex manifolds with morphisms the meromorphic maps. Here $M$ and $M_1$ are assumed to be closed connected and oriented smooth manifolds. In this case there will be a slight adjustment in the roles played by $M$ and $M_1$ as result of redefining certain terms.
When the context is clear, it is assumed that complexified tangent bundles are taken in each case.

The natural examples in this context include rational maps of (algebraic) projective varieties, and this fully enriched
situation is the one to which we pay some attention.
But we will point out now (as the astute minded reader can see) that the development of ideas, and constructions, etc.
apply equally well if the spaces in question are just taken to be compact complex manifolds. But restricting to the algebraic
case affords us some access to using significant numerical data, which otherwise might not necessarily be the case in the more general
setting.

\subsection{Application of the general result}\label{application-1}

Let $X$ and $Y$ be compact projective manifolds, $\dim_{\bC}X = \dim_{\bC}Y = n$, and let $f:X \supset U \lra Y$ be a rational map.
In general, such a map will have a closed algebraic \emph{indeterminacy set} $I_f$, namely the locus of points in $X$ for which $f$ fails to be defined,
with open complement $U = X - I_f$.

Let the locally closed complex algebraic subset
\begin{equation}\label{rankset-1}
X(s) :=\{x \in X: \rank_{\bC} (df(x)) \leq s < n \} \subset X,
\end{equation}
be such that
$f^{-1}(f(X(s))= X(s)$.

Letting $Z$ be the complex algebraic subset  $Z = I_f \cup X(n -1)$, we consider $(X,Z)$, or more generally $(M, M \cap Z)$, as an $(n,\bZ)$ adapted pair
(with $r= \codim_{\bC} Z \geq 1$, respectively $2r = \codim_{\bR}(M \cap Z)$), with $M \subset X$ a closed and connected smooth oriented submanifold transversal to $Z$.

More specifically, $M$ does not intersect the singular locus $Z_{sing}$ of $Z$, and is transversal to the regular part $Z_{reg}$ of $Z$, with $M_1 = M \cap Z$ a closed and connected smooth oriented submanifold, $\codim_{\bR} M_1 = 2r \geq 2$. In particular, when $M=X$ is connected of real dimension $2n$, the space $Z_{sing}$ is assumed empty, with $Z$ connected.

\begin{remark}
The point of taking $s = n-1$ in the definition of $Z$, is because at a later stage we will need $\psi = df$ to be an isomorphism outside of $Z$, and $\del B_1(N) = \del B_2(N)$.
\end{remark}

\begin{remark}\label{rational-remark}
We recall from e.g. \cite{GH} that such a rational map $f: X \lra Y$ can be specified by a holomorphic map
$\tilde{f}: X - I_f \lra Y$, for which $\codim_{\bC}I_f \geq2$. Thus for now, we are motivated to take $r \geq 2$, and
view $Z$ as a `singular projective variety'. Also, in cases where $X=Y$, for instance, we might replace $X(s)$ above, by the fixed point set
$\Fix(f)$ of $f$, in the case of a self-map $f:X \lra X$ (see e.g. \S\ref{fixed-point}).
\end{remark}

\subsection{A certain $2p$-cycle and application of Theorem \ref{main-1}}\label{p-cycle}

We fix the smooth submanifold $M \subset X$, with $\dim_{\bR} M = 2p$ (for a fixed $p$ with $1 \leq p \leq n$) that
intersects $Z$ transversally, and set
\begin{equation}\label{N-cap}
N = M \cap Z = M \cap (X(n-1)\cup I_f),
\end{equation}
so that $\dim_{\bC} N = p -r $, for $p \geq r$.
On applying our general result, we note that the isomorphisms in question are simply \emph{topological}
unless otherwise stated.

To this extent we take $(M,N)$ to be a $(2p, \bZ)$-adapted pair, and
take $B(N)$ to be a tubular neighborhood of $N$ in $M$. Thus $N$ now
plays the role of $M_1$ in \S\ref{adapted}. With this slight
modification in mind, we construct as in \S\ref{clutching}, the smooth
double $S(N)$ producing
the $S^{2r}$-fibration $q: S(N) \lra N$, along with projections $q_i: B_i(N) \lra N$ (for $i=1,2$) as in \eqref{split-proj}.
Note this produces a $2p$-cycle $[S(N)]$.

Here we will set
\begin{equation}\label{bundles-1}
E = TX \vert_{B_1(N)}, ~\text{and}~ F = f^* TY \vert_{B_2(N)},
\end{equation}
where, as before,  $\psi = df: TX \lra f^*TY$ is a isomorphism when restricted to
$\del B_1(N) = \del B_2(N)$.

Following Theorem \ref{main-1}, and from the construction of \S\ref{clutching} in the context of complex vector bundles with
structure group ${\rm{U}}(q)$, for some $q$, we straightaway
obtain
\begin{equation}\label{chern-2}
\langle \Phi^*_E(P) - \Phi^*_F(P), [M] \rangle  = \langle \Phi^*_{\xi}(P), [S(N)] \rangle,
\end{equation}
in terms of Chern polynomials $\Phi_{\Diamond}(P)$.

On applying \eqref{N-cap} together with \eqref{bundles-1}, then \eqref{chern-2} with $\Phi_{\Diamond}^*(P) = c_p(\Diamond)$ reduces to the following form (cf. \cite{Brasselet1,Vanque}):
\begin{equation}\label{chern-3}
\langle c_p(X) - f^* c_p(Y), [M] \rangle  = \langle c_p(\xi), [S(N)] \rangle,
\end{equation}
where $\xi = (E, \psi, F)$ is given by \S\ref{clutching}.

\subsection{The righthand side of \eqref{chern-3}}

In order to deal with enumerating the righthand side of \eqref{chern-3}, we return to the setting and conditions of \S\ref{clutching}.
Here we take $\psi$ to have constant rank $n-r$ along $N$, and following \eqref{ell-def} we have the isomorphism
\begin{equation}\label{ell-bundle-1}
TZ \vert_N \cong L = \psi(TZ)\vert_{N},
\end{equation}
so that $\rank_{\bC} L = n-r$.
We also recall from \S\ref{clutching}, the relations
\begin{equation}
\begin{aligned}
E\vert_N &\cong K_1 \oplus L, ~  ~ F\vert_N \cong K_2 \oplus L, \\
E &\cong q_1^*(K_1) \oplus q_1^*(L), ~ ~ F \cong q_2^*(K_2) \oplus q_2^*(L),
\end{aligned}
\end{equation}
while noting that $N$ is a deformation retract of $B_i(N)$, for $i=1,2$.
Hence on $S(N)$ we have the isomorphism $\xi \cong (q_1^* K_1, \eta, q_2^* K_2) \oplus q^*L = K \oplus q^*L $, as in \eqref{xi-bundle}, with
$\rank_{\bC} \xi = n$, from which we deduce $\rank_{\bC} K = r$ (note that we have identified $K_1$ with the restriction to $N$ of a complex rank $r$ vector bundle normal to $TZ$ in $TX\vert Z$).

As deduced from the total Chern classes of $K$ and $q^*(L)$, it is straightforward to show that
\begin{equation}\label{chern-4}
\begin{aligned}
c_k(K \oplus q^*L) &=  \sum_{\nu=1}^r c_{\nu}(K) \cup q^*c_{k - \nu}(L) + q^*c_k(L), ~ ~ 1\leq k \leq n-1, \\
c_n(K \oplus q^*L) &=  \sum_{\nu=1}^r c_{\nu}(K) \cup q^*c_{n - \nu}(L).
\end{aligned}
\end{equation}

\begin{remark}
Following from \cite[\S4]{Brasselet1}(cf. \cite[pp. 408-409]{Schwartz1}), the existence on $S(N)$ of $\ell$ linearly independent trivializing sections
of $q^*(L)$, with $\ell = (n-r) - (k-r) = n-k$, leads to the result
\begin{equation}
\langle q^*c_k(L), [S(N)] \rangle = 0, ~\text{if} ~ 1\leq k \leq n-1.
\end{equation}
\end{remark}

\begin{theorem}\label{main-3}
With regards to \eqref{N-cap} and \ref{ell-bundle-1}, we have
\begin{equation}\label{chern-6}
 \langle c_p(TX\vert M) - c_p(f^*TY\vert M), [M] \rangle  = k \langle c_{p-r} (L), [N] \rangle,
 \end{equation}
 for some constant $k \in \bZ$, with $\dim_{\bR} M = 2p$ and $\dim_{\bR} N = 2(p-r)$.
\end{theorem}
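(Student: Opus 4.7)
The plan is to start from \eqref{chern-3}, which has already converted the problem to computing the right-hand side $\langle c_p(\xi), [S(N)] \rangle$. All remaining work is a fibre integration along the closed sphere bundle $q: S(N) \lra N$, exploiting the splitting $\xi = K \oplus q^*L$ from \eqref{xi-bundle} with $\rank_{\bC} K = r$.

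First I would expand $c_p(\xi)$ via \eqref{chern-4} as
$$c_p(\xi) = \sum_{\nu=1}^{r} c_\nu(K) \cup q^* c_{p-\nu}(L) + q^* c_p(L),$$
where the final summand drops out when $p = n$. Applying the projection formula $q_*(\beta \cup q^* \alpha) = q_*(\beta) \cup \alpha$ (the usual consequence of the Gysin sequence \eqref{gysin}) reduces the computation to understanding each $q_*(c_\nu(K)) \in H^{2\nu - 2r}(N)$.

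The dimensional geometry of the $S^{2r}$-fibration $q$ then does the work. Since $q_{*}$ decreases cohomological degree by $2r$, the class $q_*(c_\nu(K))$ vanishes for $\nu < r$, so only the $\nu = r$ term survives. Moreover $q_*(c_r(K)) \in H^{0}(N, \bZ)$ is locally constant, hence (using that $N$ is connected by the setup of \S\ref{application-1}) equals a single integer $k \in \bZ$. This $k$ is precisely the constant already isolated in the proof of Theorem \ref{main-2}, namely the Chern number of the clutched bundle $K_x$ on a single fibre $S^{2r}$. The trailing term $q^* c_p(L)$ is then discarded by observing $q_{*}(q^* c_p(L)) = c_p(L) \cup q_{*}(1) = 0$ since $q_*(1)$ lies in $H^{-2r}(N) = 0$; equivalently, $q_{*}[S(N)] = 0$ in $H_{2p}(N)$ because $\dim_{\bR} N = 2(p-r) < 2p$ (this is the same mechanism invoked in the Remark preceding the theorem).

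Putting these pieces together gives $\langle c_p(\xi), [S(N)]\rangle = k \langle c_{p-r}(L), [N]\rangle$, and combining with \eqref{chern-3} yields \eqref{chern-6}. The only step requiring real care, rather than mechanical manipulation, is bookkeeping: one must check that the rank constraint $\rank_{\bC} K = r$ and the implicit hypothesis $p \geq r$ (without which $c_{p-r}(L)$ is meaningless, and $N$ itself has the wrong dimension) conspire to kill every intermediate term $1 \leq \nu \leq r-1$, and that the integer $k$ produced here is indeed the same universal constant furnished by Theorem \ref{main-2}, so that the result lands inside the framework already developed.
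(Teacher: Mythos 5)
Your argument is correct and is essentially the paper's: the paper proves Theorem \ref{main-3} by checking the hypotheses of Theorem \ref{main-2} (via Remark \ref{hypothesis-remark}) and citing \eqref{characteristic-2}, and the fibre-integration computation you carry out — the Whitney expansion \eqref{chern-4}, the projection formula from the Gysin sequence \eqref{gysin}, the degree count killing the terms with $\nu<r$, and the identification of $k$ as the constant fibrewise Chern number of $K_x$ over the connected base $N$ — is exactly the mechanism already contained in the proofs of Theorem \ref{main-2} and Corollary \ref{euler-lemma}. The only difference is presentational: you unfold that argument inline rather than invoking the earlier theorem, which makes the role of the rank constraint $\rank_{\bC}K=r$ and the hypothesis $p\geq r$ more explicit but adds no new idea.
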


\begin{proof}
In view of Theorem \ref{main-2}, we will here set $E =TX\vert M$, $F= f^*TY\vert M$, and consider the clutched bundle $\xi = (E, \psi, F)$ over $M$. Note that on calling Remark \ref{hypothesis-remark}, the hypotheses 1) and 2) of Theorem \ref{main-2} are satisfied in this case.
To proceed, it suffices to consider total Chern classes $c_{*}$, which are multiplicative, and then commence by substituting this data into the left hand side of \eqref{characteristic-2}.

We have $\rank_{\bC}K =r$, $\rank_{\bC} L = n-r$, and the total Chern class $c_{*}(K)$ is of cohomological degree $\leq 2r = \codim_{\bR} N$, so Theorem \ref{main-2}, in particular \eqref{characteristic-2}, can be applied. Finally, we have $\langle c_{*}(L), [N] \rangle = \langle c_{p-r}(L), [N] \rangle$,
since $\dim_{\bR}N = 2(p-r)$.
\end{proof}

\subsection{Interpreting Theorem \S\ref{main-3}}

In view of applying the clutching construction of \S\ref{clutching}, Theorem \S\ref{main-3} produces a significantly general
formula that can be observed when regulating the topology of a rational map $f:X \lra Y$ of compact
projective varieties of equal (complex) dimension in terms of the cycles $[M]$ and $[N]$ as defined.
Note that $[N]$ is a cycle
which contains part of the (possibly singular) variety $Z=X(n-1) \cup I_f$, once $Z$ is intersected by the $2p$-cycle
$[M]$ as in \eqref{N-cap}.

A working principle is to `resolve'
the indeterminacy set $I_f$, for instance by blowing up along
$f(I_f)$, and then reduce matters to considering a holomorphic map $\hat{f}: X \lra
Y$.

For instance, if $M=X$ (so $p=n$), then \eqref{chern-6} in this case reduces to:
\begin{equation}\label{chern-7}
\langle c_n(X) - \hat{f}^* c_n(Y), [X] \rangle  = k \langle c_{n-r}(X(n-r)), [X(n-r)] \rangle,
\end{equation}
for a constant $k \in \bZ$, with $X(n-r) = \{x \in X: \rank_{\bC} (d\hat{f}(x)) \leq n-r \}$ smooth and connected, $\codim_{\bC} X(n-r) =r \geq 1$, and $d\hat{f}$ of constant (complex) rank $n-r$ along $Z=N=X(n-1)=X(n-r)$ (so that $L \cong TZ \cong TX(n-r)$).

More specifically, given $f: X \lra Y$, one may pass to a
proper modification $\hat{f} : \what{X} \lra Y$, and apply the formula
in Theorem \ref{main-3} for a holomorphic map, provided
  $\what{X}$ is a compact complex manifold.
  \footnote{We recall from \cite{Grauert} that  a \emph{proper modification} $\hat{f} : \what{X} \lra Y$
  means that $\hat{f}$ is a proper surjective holomorphic map, such that
  there exists nowhere dense analytic subsets $X' \subset X$ and $Y'
  \subset Y$, such that: i) $\hat{f}(X') \subset Y'$, ii) $\hat{f}: \what{X} - X' \lra
  Y - Y'$ is a biholomorphism, and iii) each fiber $\hat{f}^{-1}(y)$, for $y
  \in Y'$, consists of more than one point. The set $X'=
  \hat{f}^{-1}(y)$ is called the \emph{exceptional set}.
Note that $\what{X}$ may not necessarily be an algebraic variety in general, even if $X$
has this property \cite{Hironaka}.}

\subsection{The case of a holomorphic ramified covering map}\label{holo-ramified}

Consider the case where $s=n-1$ is constant, and let $f: X \lra Y$ be a holomorphic branched covering map
for which $X_1 := X(n-1) \subset X$ is the ramification divisor on which $\rank_{\bC}
f\vert_{X_1} = n-1$, with $r= \codim_{\bC} X_1= 1$. As before, let $M \subset X$ be any compact oriented
smooth submanifold with $\dim_{\bR} M = 2p$ that meets $X_1$
transversally (with $1 \leq p \leq n$), with  $N = M \cap X_1$. From \eqref{ell-bundle-1}, we have $L = TX_1 \vert _N$. This leads to a version of the higher dimensional Riemann-Hurwitz formula as given in \cite[Proposition 2]{Brasselet1} (see also \cite[p. 409]{Schwartz1}):
\begin{equation}\label{chern-8}
\langle f^*c_p(Y) - c_p(X), [M] \rangle = (\mu - 1)
\langle c_{p-1}(X_1), [N] \rangle,
\end{equation}
where $\mu = \deg(f \vert_{X^{(1)}}) \in \bZ$ is the local topological degree of $f$ along $X_1$.
Note that in general, the global degree $\deg(f) := \delta \neq \mu$.

Together with \eqref{chern-6}, this case reveals the interest in enumerating the righthand side of
\eqref{pairing-1} and \eqref{characteristic-1} in general.

\begin{example}
Let $n=2$, where $X$ is now a compact complex surface, and let $Y= \bC P^2$. Consider a holomorphic map $f: X \lra \bC P^2$ which is branched over a curve $C$ of genus $g$ with normal crossings. Thus $f: X - X_1 \lra \bC P^2 - C$ is an unramified covering map of degree $\deg f = \delta$ say, where
the branch set  $X_1 = f^{-1}(C)$.

As an example of showing consistency in the data, suppose in this case $X$ is a K-3 surface. One can construct a map with $\delta = \mu =4$, branched over a quartic curve $C$ (see e.g. \cite{Morrison}). In this top dimension, we have
$\langle c_2(X), [X] \rangle = \chi(X)= 24$, and $\langle f^*c_2(\bC P^2), [X] \rangle = 4(\chi(\bC P^2) = 4(3) = 12$. From \eqref{chern-8}, with $n=p=2$, and
the adjunction formula
(e.g. \cite[p.221]{GH}), it is straightforward to see that $\langle c_1(X_1), [X_1] \rangle = \chi(C)= -4$, and therefore $g=3$.
\end{example}

%%%%%%%%%%%%%%%%%%%%%%%%%%%%%%%%%%%%%%%%%%%%%%%%%%%%%%%%%%%%%%%%%%%%%%%%%%%%%%%%%%%%%%%%%%%%%%%%%%%%%%%%%%%%%%%%%%%%

\section{Rational self-maps}\label{rational-maps-2}

\subsection{Iterated self-maps of projective varieties}

Now we consider the case where $X=Y$, and $f: X \lra X$ is a rational map with $k$-th iterate denoted $f^k$.
Let
\begin{equation}\label{rankset-2}
X(k,s):=\{x \in X: \rank_{\bC} (df^k(x)) \leq s < n \},
\end{equation}
and set $Z = X(k,n-1)\cup I_{f^k}$, with $\codim_{\bC} Z = r_k$.
Let $\Delta_k = \deg(f^{k})$, and note that in general, $\Delta_k \neq \delta^k = \deg(f)^k$.

We recall from \S\ref{application-1}, making slight modifications, that $M \subset X$ is a closed and connected smooth oriented submanifold, $\dim_{\bR}M =2p$, transversal to $Z = X(k,n-1)\cup I_{f^k}$. Again, $M$ does not meet $Z_{sing}$, and is transversal to $Z_{reg}$, with $M_1 = M \cap Z$ a closed and connected smooth oriented submanifold, $\codim_{\bR} M_1 = 2r_k \geq 2$. When $M =X$ is connected, $\dim_{\bR}M =2n$, then $Z_{sing} = \emptyset$, and $Z$ is connected.

Also, we take $E = TX \vert_{B_1(N)}$, and $F = (f^k)^* TX \vert_{B_2(N)}$.
These are only the essential differences,
otherwise the basic construction leading to the various formulas remains the same. In particular, Theorem \ref{main-3} holds with $X(n-1)$ replaced by $X(k,n-1)$ in \eqref{rankset-2}.

\subsection{Holomorphic and analytically stable maps}

Let us deal first with a holomorphic map $f:X \lra X$, where the $k$-th iterate $f^k$ is a
holomorphic branched covering map with ramification divisor $X_{1,k} := X(k, n-1)$, with $r_k = 1$. Then \eqref{chern-8} reads
as
\begin{equation}\label{chern-9}
\langle (f^k)^*c_p(X) - c_p(X), [M] \rangle = (\mu_k - 1)
\langle c_{p-1}(X_{1,k}), [N] \rangle,
\end{equation}
where $\mu_k \in \bZ$ is the local topological degree of $f^k$ along $X_{1,k}$.
We have $\Delta_k \neq \mu_k$, in general.

However, in this case where $f$ is holomorphic and $X= \bC P^n$, we do have $\Delta_k = \delta^k$ \cite{Fornaess1}.
On the other hand, it is clear for $M = \bC P^n$, and $p=n$, that such maps are thus regulated by the expression derived from \eqref{chern-9}:
\begin{equation}\label{chern-10}
(n +1)(\Delta_k - 1) = (\mu_k - 1)
\langle c_{n-1}(X_{1,k}), [X_{1,k}] \rangle.
\end{equation}
For instance:
\begin{proposition}
Let $f: \bC P^2 \lra \bC P^2$ be a holomorphic map. Suppose that the $k$-th iterate $f^k: \bC P^2 \lra \bC P^2$ is a holomorphic branched covering map over a curve of genus $g$. Then necessarily the global degree $\Delta_k \neq \mu_k$ (the local degree).
\end{proposition}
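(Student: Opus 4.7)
The plan is to specialize the general formula \eqref{chern-10} to the case $n=2$ and derive a contradiction via a parity argument on Euler characteristics. Plugging $n=2$ into \eqref{chern-10} gives
\begin{equation*}
3(\Delta_k - 1) \;=\; (\mu_k - 1)\,\langle c_1(X_{1,k}), [X_{1,k}] \rangle.
\end{equation*}
Since $X_{1,k}$ is, by the standing assumption on $N = M \cap Z$ with $M = X = \bC P^2$, a smooth, compact, connected and oriented Riemann surface, Gauss--Bonnet identifies the pairing with the Euler characteristic:
\begin{equation*}
\langle c_1(X_{1,k}), [X_{1,k}] \rangle \;=\; \chi(X_{1,k}) \;=\; 2 - 2h,
\end{equation*}
where $h \geq 0$ is the genus of $X_{1,k}$.

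Next I would argue by contradiction. Suppose $\Delta_k = \mu_k$. Since $f^k$ is a genuine branched cover, the local topological degree $\mu_k$ along the ramification divisor satisfies $\mu_k \geq 2$, so $\mu_k - 1 > 0$, and the equation reduces to
\begin{equation*}
3 \;=\; 2 - 2h,
\end{equation*}
forcing $h = -\tfrac{1}{2}$, which contradicts $h \in \bZ_{\geq 0}$. Equivalently, the Euler characteristic of a closed oriented real surface is always even, so $\chi(X_{1,k}) = 3$ is impossible. Hence $\Delta_k \neq \mu_k$, as claimed. Note that the genus $g$ of the branch curve in the target does not enter directly into the contradiction; it is the parity of $\chi$ of the ramification divisor in the source that is decisive.

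The only step that requires any care is the verification of the hypotheses of \eqref{chern-10} in the present setting: that $M = X = \bC P^2$ is closed, connected of real dimension $2p = 4$; that $N = X_{1,k}$ is a smooth connected oriented divisor (i.e., $r_k = 1$); and that $df^k$ has constant complex rank $n - 1 = 1$ along $N$. All of these conditions are immediate from the definition of a holomorphic branched covering branched along a smooth curve, so I expect no substantive obstacle beyond this routine check.
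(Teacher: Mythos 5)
Your proof is correct and follows essentially the same route as the paper: specialize \eqref{chern-10} to $n=2$, identify $\langle c_1(X_{1,k}),[X_{1,k}]\rangle$ with $2-2g$, and observe that $\Delta_k=\mu_k$ would force $3=2-2g$, which is impossible for an integer genus. Your version is in fact marginally more careful than the paper's one-line "which is meaningless," since you explicitly rule out the degenerate case $\mu_k=1$ before dividing.
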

\begin{proof}
A straightforward enumeration of \eqref{chern-10} leads to
\begin{equation}
3 (\Delta_k - 1) = 3 (\delta^k - 1) = (\mu_k - 1) (2- 2g),
\end{equation}
which is meaningless if $\Delta_k = \mu_k$.
\end{proof}

Taking $n=2$, a bimeromorphic map $f: X \lra X$ is said to be \emph{analytically stable} if:  i) for all $k \geq 0$, we have
$(f^k)^* = (f^*)^k$, and ii) for each curve $C$ in $X$, $f^k(C) \notin I_f$ (see \cite{Fornaess1}).
Recall that the blow-up of $Y$ at $y$,
is the proper modification $\hat{f} : \what{X} \lra Y$ which
replaces $y$ with the exceptional curve $\pi^{-1}(y) \cong \bC P^{1}$,
the set of holomorphic tangent directions at $y$, and $\hat{f}$ is a
biholomorphism elsewhere.
In fact, in this instance, any proper modification $\hat{f} : \what{X}
\lra Y$
arises as a composition of finitely many point blow ups
(see e.g. \cite[Th. 1.1]{Diller}).
Following \cite[Th. 01]{Diller},
if $f: X \lra X$ is a bimeromorphic map, then there always
exists a proper modification
$\hat{f} : \what{X} \lra X$ that lifts $f$ to an analytically stable map.

\begin{remark}
In \cite{Bonifant1} it is shown, that for $n=2$, there are countably many sequences $\{d_{\ell}\} \subset \bN$ for which
a rational map $f: \bC P^2 \lra \bC P^2$ exists, satisfying $\Delta_{\ell} = d_{\ell}$, for all $\ell \in \bN$.
\end{remark}
It would be interesting to see how the topology of these procedures can be regulated by results of the type Theorem \ref{main-3}, and by the known results for the Chern classes of the blowing-up process (e.g. \cite{Aluffi,Porteous1}).

\subsection{Degree lowering curves}

For a general rational map $f:\bC P^n  \lra \bC P^n $, there may be any amount of peculiar behavior. For instance, it is possible that an iteration
$f^k$ may, for some $k$, map an (irreducible) curve $C$ into the indeterminacy set $I_{f^k}$ (thus $f$ cannot be analytically stable). In this case one sees that $\Delta_k < \delta^k$ \cite{Fornaess1}, and so enumerating \eqref{chern-6}, for $p=n$, can easily be seen to give
\begin{equation}
\Delta_k = 1 - \frac{1}{n+1}(k \langle c_{n-r}(L), [N] \rangle) < \delta^k,
\end{equation}
where $N$ is given by \eqref{N-cap}, $\codim_{\bR} N = 2r$, and $L$ is as in \eqref{ell-bundle-1}.

\subsection{Holomorphic maps with fixed point set}\label{fixed-point}

Next we will consider a holomorphic map $f: X \lra X$, with a (possibly singular) fixed point set $S = \Fix(f)$, with $r= \codim_{\bC} S$.
In the general setting of \S\ref{topological}, we will regard $(X,S)$ or $(M,M\cap S)$ as an $(n, \bZ)$-adapted pair with $S$ playing the role of $Z$
as in \S\ref{p-cycle}, and $M_1 = N = M \cap S$. Again, the transversality and other conditions relating to $M$, $Z_{reg}$, and $Z_{sing}$ as made in \S\ref{application-1} and \S\ref{rational-maps-2}, also apply here.

This application is partially motivated by
certain constructions in  \cite{Abate1} (where $X$ can be taken more generally to be a complex manifold) to which we will apply the general results
of Theorem \ref{main-1} and Theorem \ref{main-2}. But this will necessitate assigning some different data compared to that of the previous sections and re-defining terms accordingly.

Specifically, let us start by letting $Q_S$ denote the normal bundle to $S$ in $X$. Then let
\begin{equation}\label{bundles-2}
E = (Q_S)^{\otimes \nu_f},~ \text{and}~ F = TX,
\end{equation}
where $\nu_f \in \bN$. We take up the hypotheses in \S\ref{clutching} in terms of a homomorphism $\psi: (Q_S)^{\otimes \nu_f} \lra TX$. In particular, as in \cite{Abate1}, the restricted morphism $\psi\vert_{S}: (Q_S)^{\otimes \nu_f} \lra TX\vert_{S}$, is identified with a holomorphic section
of $TX \vert_{S} \otimes (Q_S^*)^{\otimes \nu_f}$. The bundles $K$ and $L$ are taken to be as in \S\ref{clutching}, with
\begin{equation}\label{ell-bundle-2}
L = \IM(\psi \vert_{S}) \subset TX \vert_S.
\end{equation}
Let $M$ be a $2p$-dimensional submanifold $M \subseteq X$ intersecting $S$ transversally. As before, we set $N = M \cap S$, with
$\dim_{\bC} N = p -r$ (for $p \geq r$).

Having made these adjustments for the bundles $E$ and $F$, etc., we apply Theorem \ref{main-1} together with essentially the same proof as that used in proving Theorem \ref{main-3}, to obtain
\begin{equation}\label{chern-11}
\langle c_p((Q_S)^{\otimes \nu_f}) - c_p(X), [M] \rangle = k
\langle c_{p-r}(L), [N] \rangle.
\end{equation}

\begin{example}
Consider the case $r=1$, and $S$ is a (possibly singular) hypersurface regarded as an oriented $(n-1)$-circuit. Then we have $\rank_{\bC}Q_S =1$, and $\rank_{\bC} L = n-1$, following which \eqref{chern-11} gives
\begin{equation}\label{chern-12}
\langle c_p((Q_S)^{\otimes \nu_f}) - c_p(X), [M] \rangle = k
\langle c_{p-1}(L), [N] \rangle,
\end{equation}
with $c_1((Q_S)^{\otimes \nu_f}) = \nu_f c_1(\mathcal{O}_S)$, and $c_p((Q_S)^{\otimes \nu_f}) =0$ for $p\geq 2$ (since $Q_S$ is a line bundle).
Note that applying \eqref{chern-12} for $M=X$ ($p=n$), we have $N=S$.
\end{example}

\begin{remark}
In the setting of \cite{Abate1}, the quantity $\nu_f$ is considered as a measure of `order of contact' between the map $f$ and $S$.
For the case of such a hypersurface $S$ there are the connected components $\Lambda_{\a}$ of the union of singular sets $\Sing(\fX_f) \cup \Sing(S)$, where $\Sing(\fX_f)$ is the set of zeros of a vector field $\fX_f$ associated to $f$ that induces a (generally singular) holomorphic foliation.
This leads to a residue formula $\sum_{\a} \Res(\fX_f, S, \Lambda_{\a}) = \langle c_1^{n-1}(S), [S] \rangle$ as in \cite[Th. 01]{Abate1}.
Our approach leads to somewhat different formulas as seen above. Though enumerating \eqref{chern-12} for the case $n=p=2$, reveals the right-hand (up to a constant) to be such a residual quantity.
\end{remark}

%%%%%%%%%%%%%%%%%%%%%%%%%%%%%%%%%%%%%%%%%%%%%%%%%%%%%%%%%%%%%%%%%%%%%%%%%

\section{Determinantal varieties}

\subsection{The degeneracy locus}\label{degeneracy-locus}

In this section we commence the second part of the
paper by turning to a related, but essentially more general setting.
In the previous sections we considered applying the general
result of \S\ref{topological} to rational maps of projective
varieties. But now we tweak the setting of those sections somewhat
with several terms redefined for the sake of replacing maps of
projective varieties by vector bundle morphisms over a compact complex
manifold $X$.

More specifically, consider a morphism $\psi: E \lra F$ of complex
(smooth) vector bundles of the same complex rank $\ell \geq 1$, over a projective variety $X$ (where $\dim_{\bC} X = n$). For some
given $s \in \bN$, we have the \emph{degeneracy locus} of $\psi$, as
defined by
\begin{equation}\label{degeneracy-1}
\Omega(s) := \{ x\in X: \rank_{\bC} \psi(x) \leq s < \ell\}.
\end{equation}
As in \S\ref{p-cycle}, we fix a smooth submanifold $M \subset X$, with $\dim_{\bR} M = 2p$ ($1 \leq p \leq n$) that intersects $\Omega(\ell-1)$ transversally (as previously $s=\ell-1$ is taken since we require $\psi$ to be an isomorphism outside of $\Omega(\ell-1)$).

Once again we will apply the general setting of \S\ref{topological},
where $(X, \Omega(\ell-1))$ (respectively, $(M, M \cap \Omega(\ell-1))$) is regarded as an $n$-adapted pair, so that $\Omega(\ell-1)$ (respectively, $M \cap \Omega(\ell-1)$) plays the role of $M_1$ in \S\ref{topological}), with $r=
\codim_{\bC} \Omega(\ell-1) \geq 1$.

Theorem \ref{main-1}, in particular, \eqref{pairing-1} immediately
applies to give the general statement
\begin{equation}\label{degeneracy-2}
\langle P(E) - P(F), [X] \rangle = \langle P(\xi_{\theta}),
[S(\Omega(\ell-1))] \rangle.
\end{equation}
\begin{remark}
The cohomology class $\{ \Omega(s) \}$ of $\Omega(s)$ in $X$ can be
determined by polynomials in the Chern classes of $E$ and $F$, and in
certain cases the codimension of $\Omega(s)$ can be determined (see
\cite{Fulton1,HT} which also cover a historical background to the general problem in the algebraic geometric
context). Note that \cite{HT} deals initially with results in the differentiable category, thus
\eqref{degeneracy-2} applies in that case as well.
\end{remark}

\begin{example}\label{d-example-1}
If $E \lra X$ is a complex vector bundle, $\rank_{\bC} E = \ell$, $F =
E^*$, and $\psi: E \lra E^*$ is a general symmetric bundle map, then following \cite[Th. 1]{HT}, the cohomology class
 $\{ \Omega(s)\}$ is given by a polynomial $P_s(c_1(E^*), \dots,
 c_{\ell}(E^*))$.
\footnote{Let $V$ be a vector space and $V^*$ its dual vector space. A linear map $\psi: V \lra V^*$ is said to be
 \emph{symmetric} if $(\psi(x), y) = (\psi(y), x)$ , for all $x,y \in V$, where $(~,~)$ is the dual pairing between
 $V^*$ and $V$. Equivalently, $\psi$ is symmetric if $\psi = \psi^T$. The precise meaning of `general' is explained in
 \cite[Note 2, p.72]{HT}. Likewise, $\psi$ is \emph{skew-symmetric} if $\psi = - \psi^T$.}
This latter polynomial has an explicit expression given in terms of the determinant
\begin{equation}\label{det-1}
2^{\ell-s}~
\begin{bmatrix}
c_{\ell-s} & c_{\ell-s+1} & c_{\ell-s+2}  &  \\
c_{\ell-s-2} & c_{\ell-s-1} & c_{\ell-s} & \\
\vdots &  & c_{\ell -s -2}  &  \\
\vdots & \dots & &\ddots c_1
\end{bmatrix} = \{\Omega(s)\},
\end{equation}
where $c_i= c_i(E^*)$, and further, $\Omega(s)$ has codimension $r = \binom{\ell-s+1}{2}$ (for $s < \ell$).
\end{example}

In keeping with the previous sections, we will be interested in finding
expressions linking the Chern classes of some degree ($p$, say) of the
spaces in question.

Let $N = M \cap \Omega(\ell-1)$, with
$\dim_{\bC} N = p -r$, for $p \geq r$ (again, the case $r \geq p$ can be treated likewise).
We recall the tubular neighborhoods
$B_i(N)$ of $N$ (for $i=1,2$), and consider $E,F$ as restricted to
$B_1(N)$ and $B_2(N)$ respectively, so that in accordance with
 \S\ref{clutching}, we have $E\vert \del B_1(N) \cong F \vert
 \del B_2(N)$.
Note that we do not yet assume that $\psi$ has constant rank on
$\Omega(\ell-1)$, since we are still in the context of Theorem \ref{main-1}.
We also recall from \S\ref{clutching} the $S^{2r}$-fibration $q: S(N) \lra N$.

We shall be applying the same basic strategy as in
\S\ref{rational-maps-1} (and in \S\ref{rational-maps-2}). Thus \eqref{degeneracy-2}
reduces to a statement that is more general than \eqref{chern-3}:
\begin{equation}\label{d-chern-1}
\langle c_p(E) - c_p(F), [M] \rangle  = \langle
P(\xi_{\theta})^{[2p]}, [S(N)] \rangle,
\end{equation}
where $P(\xi_{\theta}) \in H^*(BU(\ell))$ is a characteristic class whose component $P(\xi_{\theta})^{[2p]} \in H^{2p}(BU(\ell)$ corresponds to the $p$-th Chern class $c_p$ (e.g. $P$ corresponds as before to the total Chern class $c_{*}$).
Again, it is interesting to enumerate the right-hand side
of \eqref{d-chern-1} once the cohomology class $\{\Omega(\ell-1)\}$ has been determined.

\begin{remark}
In view of this last comment, if the cohomology class $\{ N\}$ of $N$ in $M$ happens to be cohomologous to
$\{ \Omega(\ell-1) \}^{[2p]}$ in $H^*(\Omega(\ell-1), \bZ)$, then the class $P(\xi_{\theta})^{[2p]}$ is expressible in
the form
\begin{equation}\label{gamma-form}
P(\xi_{\theta})^{[2p]} = q^* \{ \Omega(\ell-1) \}^{[2p]} \cup \gamma,
\end{equation}
for some $\gamma \in H^{2(p-r)}(S(N), \bZ)$.
\end{remark}
The following observations summarized as a proposition shows that, in the context of the symmetric bundle map of Example \ref{d-example-1}, there is indeed a restriction on
components of the class $P(\xi_{\theta})$.

\begin{proposition}\label{d-prop-1}
With regards to the context of Example \ref{d-example-1},
we have the following relationships:
\begin{itemize}
\item[(1)] For $p$ odd,
\begin{equation}\label{d-chern-2}
2\langle c_p(E), [M] \rangle =  \langle
 P(\xi_{\theta})^{[2p]}, [S(N)] \rangle.
 \end{equation}

\item[(2)]
For $p$ even, the class $P(\xi_{\theta})^{[2p]}$ is trivial
in $H^{2p}(S(N), \bZ)$.
\end{itemize}
\end{proposition}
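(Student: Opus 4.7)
The approach is to exploit the symmetry built into Example \ref{d-example-1}, namely that $F = E^{*}$, together with the standard identity $c_p(E^{*}) = (-1)^p c_p(E)$. Substituting $F = E^{*}$ into \eqref{d-chern-1} gives
\begin{equation*}
(1-(-1)^p)\,\langle c_p(E), [M]\rangle \;=\; \langle P(\xi_{\theta})^{[2p]}, [S(N)]\rangle,
\end{equation*}
from which both parts are to be derived according to the parity of $p$.

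For $p$ odd the prefactor on the left is $2$, so the displayed equality is precisely \eqref{d-chern-2}, establishing (1). For $p$ even the prefactor vanishes, whence the Kronecker pairing $\langle P(\xi_{\theta})^{[2p]}, [S(N)]\rangle$ is zero. Part (2) then requires upgrading this vanishing of a numerical pairing to the vanishing of the cohomology class itself. For this step I would use the clutching description from \S\ref{clutching}: $S(N)$ is the double of a tubular neighborhood of $N$ in $M$, hence a closed oriented smooth manifold of real dimension $2p$, and it is connected because $N$ is connected by the adaptedness hypothesis of \S\ref{degeneracy-locus}, while the typical fibre $S^{2r}$ of $q:S(N)\lra N$ is connected for $r\geq 1$. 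Poincar\'e duality therefore identifies $H^{2p}(S(N),\bZ) \cong \bZ$, with the evaluation against $[S(N)]$ an isomorphism; so the vanishing of the Kronecker pairing forces $P(\xi_{\theta})^{[2p]} = 0$ in $H^{2p}(S(N),\bZ)$, which is (2).

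The main obstacle is really only this last Poincar\'e duality step in part (2); it depends on the class $P(\xi_{\theta})^{[2p]}$ living in top degree on $S(N)$ and on $S(N)$ being closed, connected, and orientable, which are immediate from the clutching construction but deserve to be recorded explicitly. All other steps reduce to an algebraic manipulation of \eqref{d-chern-1} through the duality relation $c_p(E^{*}) = (-1)^p c_p(E)$, and no further information about the genericity of the symmetric map $\psi$ or the determinantal structure of $\Omega(\ell-1)$ enters the argument.
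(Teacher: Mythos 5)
Your proposal is correct and takes essentially the same route as the paper: the paper likewise substitutes $F=E^{*}$ into \eqref{d-chern-1} and uses $c_p(E^{*})=(-1)^{p}c_p(E)$ to obtain the prefactor $(1+(-1)^{p+1})=1-(-1)^{p}$, then reads off the two cases by parity. The only difference is that the paper dismisses part (2) with ``from which the results follow,'' whereas you explicitly record the Poincar\'e-duality step needed to upgrade the vanishing of the top-degree pairing on the closed, connected, oriented $2p$-manifold $S(N)$ to the vanishing of the class $P(\xi_{\theta})^{[2p]}$ itself --- a detail worth making explicit.
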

\begin{proof}
Noting that $c_p(E^*)
= (-1)^p c_p(E)$ (see e.g. \cite[p.411]{GH}), we have from
\eqref{d-chern-1}
\begin{equation}\label{d-chern-3}
(1 + (-1)^{p+1}) \langle c_p(E), [M] \rangle = \langle
 P(\xi_{\theta})^{[2p]}, [S(N)] \rangle,
\end{equation}
for $1 \leq p \leq \ell$, from which the results follow.
\end{proof}
Further enumeration of the righthand side of \eqref{d-chern-3}, can be carried out under the conditions of \S\ref{clutching}
which we will deal with next.

\subsection{Constant rank case}
\label{constant-1}

Suppose now that $\rank_{\bC} \psi
\vert_{\Omega(\ell-1)} = s < \ell$, is constant (so that $\Omega(s) = \Omega(\ell-1)$), and the bundle map $\psi$ is taken
as a clutching map between $E$ and $F$, as in \S\ref{clutching}. In this case, there is the class of the $2p$-component given by
$P(\xi_{\theta})^{[2p]} = c_p(E, \psi, F)$.
Also, Theorem \S\ref{main-2}
applies to give
\begin{equation}\label{d-chern-4}
\langle P(E) - P(F), [X] \rangle = k\langle P(L),
\Omega(\ell-1)] \rangle,
\end{equation}
where, as before $L = \psi(E\vert_{\Omega(\ell-1)}) \subset F\vert_{\Omega(\ell-1)}$, with $\rank_{\bC} L =\ell-1$. In particular,
$P(L) \in H^*(\Omega(\ell-1), \bZ)$; so knowing the cohomology of $\Omega(\ell-1)$ gives us a handle on the class $P(L)$.
Applying Theorem
\ref{main-2} (cf. Theorem \ref{main-3}), we then have
\begin{equation}\label{d-chern-5}
\langle c_p(E) - c_p(F), [M] \rangle  = k\langle
c_{p-1} (L), [N] \rangle.
\end{equation}

\begin{example}
In view of the above remarks, let us return to the context of Example \ref{d-example-1}.
Here we have $L = \psi(E\vert_{\Omega(\ell-1)}) \subset E^*\vert_{\Omega(\ell-1)}$, and the cohomology class $P(L) = P(c_1(E^*), \dots, c_{\ell-1}(E^*))$.
On applying \eqref{d-chern-5}, we thus obtain for
$p$ odd,
\begin{equation}\label{d-chern-6}
2\langle c_p(E), [M] \rangle = k \langle c_{p-1}(E^*), [N] \rangle =
 \kappa \langle  P(c_1(E^*), \dots, c_{\ell-1}(E^*)), [N] \rangle,
\end{equation}
for some constants $k$ and $\kappa$.
\end{example}

\begin{example}\label{hodge}
There are analogous results in \cite{HT} for the cohomology class $\{\Omega(s) \}$ in the case of skew-symmetric maps (morphisms)
$\psi: E \lra E^*$. The cases $\psi: E \lra E^* \otimes \mathcal{L}$, for $\mathcal{L}$ a complex line bundle, are also studied in the symmetric and skew-symmetric cases. For instance, when $\psi: E \lra E^* \otimes \mathcal{L}$ is a general symmetric bundle map, then the cohomology class
$\{\Omega(s)\}$ is given by \eqref{det-1}, but now taking $c_i = c_i(E^* \otimes \sqrt{\mathcal{L}})$ \cite[Th. 10]{HT}.
The (general) skew -symmetric case can likewise be treated.
\end{example}

\begin{example}
(Application to variation of Hodge structure following \cite{HT}):
Consider a family $\varpi: \mathcal{C} \lra X$ of curves of genus
$g$. Let $\sfH^{1,0}, \sfH^{0,1}$ denote the corresponding Hodge
bundles. We have then a period map $\Upsilon: X \lra
\Gr(g,2g)/\Gamma$, where $\Gr(g,2g)$ denotes a certain isotropic
Grassmannian, and $\Gamma \subset \Aut(\Gr(g, 2g))$ is a discrete
subgroup. Consequently, there is a bundle morphism $\psi: TX \lra
\Hom(\sfH^{1,0}, \sfH^{0,1})$, that can be expressed alternatively as a symmetric
bundle map
$TX \lra S^2(\sfH^{1,0})^*$ \cite{HT}.

When $X$ is an algebraic curve of genus $g_X$, and there are no
singular fibres of $\varpi$, then one can enumerate matters as follows.
Setting $E = \sfH^{1,0}$ (so $\rank_{\bC} E = g$), we have
from \cite[p.82]{HT}
$c_1(\det(S^2E^* \otimes \mathcal{O}(1)) \geq 0$. Observing that
$c_1(S^2E^*) = (g+1)c_1(E^*)$, then this previous expression
simplifies to $g(g_X - 1) \geq c_1(E)$.

It can be argued that if the variation of Hodge structure over $X$ is
non-trivial, then by the local Torelli theorem, the period map
$\Upsilon$ has maximal rank at some point of $X$, and by
\cite[Th. 10]{HT}, the degeneracy locus $\Omega(g-1)$ in this case, is
not all of $X$. In the context of a general symmetric bundle map, here given by $\psi: E
\lra E^* \otimes \mathcal{O}(-1)$, and from the remarks in Example
\ref{hodge} above, it follows that the cohomology class
$\{\Omega(g-1)\} = -2c_1(E \otimes \mathcal{O}(-\frac{1}{2})) = -2
(c_1(E) - g(g_X -1))$. This provides us with an enumeration of
\eqref{d-chern-5} with $M=X$,~$N=\Omega(g-1)$, and $F =
\mathcal{O}(-\frac{1}{2})$, in the case of $p=1$ and $s = g-1$. In this case there is just a single
constant $k=k_{\nu} = -\frac{1}{2}$.
\end{example}

\subsection{Flagged bundles}\label{flagged}

Suppose now we consider, as in \cite{Fulton1}, the more general situation of \S\ref{degeneracy-1} for a morphism $\psi: E \lra F$, over $X$,
for which
\begin{equation}\label{flag-1}
\begin{aligned}
E_1 &\subset E_2 \subset \cdots \subset E_u = E\\
F&= F_v \thra F_{v-1} \thra \cdots \thra F_1
\end{aligned}
\end{equation}
are flags of subbundles and quotient bundles, respectively. Here we will take integers $s(\a,\be) \in \bN$ specified across the
intervals $1 \leq \a \leq u$, and $1 \leq \be \leq v$, and the degeneracy locus is then defined by
\begin{equation}\label{flag-2}
\Omega(\mathbf{s}) := \{ x \in X : \rank_{\bC} (E_{\a}(x) \lra F_{\be}(x)) \leq s(\a,\be), \forall \a, \be \},
\end{equation}
where $\mathbf{s}$ is regarded as a certain rank function.
As shown in \cite{Fulton1}, conditions on $\mathbf{s}$ determine the irreducibility of $\Omega(s)$
as a projective variety, and further, the cohomology class
$\{\Omega(\mathbf{s})\}$ can be determined in terms of the Chern
classes of $E$ and $F$.

In the case $\psi \vert_{\Omega(\mathbf{s})}$ has constant rank
$s(\a, \be)$, and $\psi$ is a clutching map as before, we apply
Theorem \S\ref{main-2}
to obtain
$\langle P(E) - P(F), [X] \rangle = k\langle P(L),
[\Omega(\mathbf{s})] \rangle$.

\subsection{Complete flags}
\label{complete}

Following \cite{Fulton1}, we will exemplify matters in
the case of `complete flags' for the data $u=v=m$, and $E_i, F_i$ having (complex) rank $i$. In this case, $\Omega(\mathbf{s})$ is characterized by permutations in the symmetric group $S_m$. Given $w \in S_m$, let $\ell(w)$ be the length of $w$ (in other words, the number of inversions). Let $\mathbf{s}_w(\be,\a) = \text{card}\{ i \leq \be : w(i) \leq \a \}$, and
\begin{equation}\label{flag-3}
x_i = c_1(\Ker(F_i \thra F_{i-1})), \text{and}~ y_i = c_1 (E_i/E_{i-1}), ~\text{for}~ 1 \leq i \leq m.
\end{equation}
Then one restricts attention to
\begin{equation}\label{flag-4}
\Omega(w) = \Omega(\mathbf{s}_w) := \{ x \in X: \rank_{\bC} (E_{\a}(x) \lra F_{\be}(x)) \leq \mathbf{s}_w(\be, \a), \forall \a, \be \}.
 \end{equation}
 Here we make several observations from \cite{Fulton1}:
 \begin{itemize}
 \item[(i)]
 The space $\Omega(w)$ has a natural structure of a scheme given by the vanishing of
 the induced maps from $\bigwedge^{\mathbf{s}_w(\be, \a) +1}(E_{\a}) \lra \bigwedge^{\mathbf{s}_w(\be, \a) +1}(F_{\be})$.

 \item[(ii)]
 The expected (maximum) value of $r = \codim_{\bC} \Omega(w)$, is
 $r= \ell(w)$.

 \item[(iii)]
 The cohomology class $\{\Omega(w)\} = \mathfrak{S}_w(x,y)$, where
 $\mathfrak{S}_w(x,y) = \mathfrak{S}(x_1, \ldots, x_m, y_1, \ldots, y_m)$, is the double Schubert polynomial for $w$, this being a
 homogeneous polynomial in the $2m$ variables of degree $\ell(w)$ (see \cite{Fulton1} for details of the latter).
 \end{itemize}
Theorem \ref{main-1} applies directly to give
\begin{equation}\label{flag-5}
\langle P(E) - P(F), [X] \rangle = \langle P(\xi_{\theta}),
[S(\Omega(\mathbf{s}))] \rangle,
\end{equation}
In the case $\psi\vert_{\Omega(w)}$ has constant rank (less than maximal), we deduce from Theorem \ref{main-2}, that
\begin{equation}\label{flag-6}
\langle P(E) - P(F), [X] \rangle = \langle \what{\mathfrak{S}}_w(x,y),
[\Omega(\mathbf{s})] \rangle,
\end{equation}
where $\what{\mathfrak{S}}_w(x,y)$ is a double Schubert polynomial in
the class $\{\Omega(w)\}$. Thus, with respect to the cycles $[M]$ and $[N]$ as previously defined, we have
\begin{equation}\label{flag-7}
\langle c_p(E) - c_p(F), [M] \rangle = \langle( \what{\mathfrak{S}}_w(x,y))^{[2p]},
[N] \rangle.
\end{equation}

\subsection{Final remark and a further example}\label{final-rem}

We have already mentioned, in the Introduction, the modification of the main result of \cite{GGV} to the topology of generalized monoidal transformations \cite{GV1,GV2}. In closing, we should add that there are likely to be further situations to which Theorem \S\ref{main-1} can be applied. As an example of such a situation, in a similar vein to the development of \S\ref{degeneracy-locus}, consider the following.
\begin{example}\label{todd}
This follows from \cite{Porteous2}. Let $\mathcal{L} \lra X$ be complex line bundle ($X$ here can be a complex manifold), and let
$h: \mathcal{L} \lra \bC^{\ell+1}$ be a transversal linear system on $X$ in the sense of \cite{Porteous2}. Let $E= Q\mathcal{L}$ be the vector bundle on $X$ whose sections consist of the $\bC$-invariant vector fields on $\mathcal{L}$, and let $F= \Hom(Q\mathcal{L}, \bC^{\ell+1})$.
From $h$, one can define a complex vector bundle morphism $\psi: E \lra F$, whose singular set, called the Jacobian set $J(h)$, can be formulated
in a similar way to \eqref{degeneracy-1} (and plays a similar role to $\Omega(s)$). The main results of \S\ref{topological} likewise apply to the adapted pair $(X, J(h))$ (cf. \eqref{degeneracy-2}), and further enumeration in the constant rank case produces a formula similar to \eqref{d-chern-5}
for Chern classes of appropriate order (cf. \cite{Porteous2}).
\end{example}

%%%%%%%%%%%%%%%%%%%%%%%%%%%%%%%%%%%%%%%%%%%%%%%%%%%%%%%%%%%%%%%%%%%%%%%%%%%%%%%%%%%%%%%%%%%%%%%%%%%%%%%%%
\bigbreak
\noindent
\textbf{Acknowledgement}
It has been a pleasure for us to dedicate this paper in recognition of Professor Seade's remarkable contributions to research up to his 60-th year, and hopefully beyond as well. Our contribution to the Proceedings benefitted enormously from a substantial and painstakingly detailed report from an anonymous referee who pointed out several corrections and suggested significant improvements in the general presentation. Thus we express our sincere gratitude to the referee for providing such an excellent report towards revising an earlier version of this paper.

\medn
A. Verjovsky was financed by grant IN103914, PAPIIT, DGAPA, of the
Universidad Nacional Aut\'onoma de M\'exico.

%%%%%%%%%%%%%%%%%%%%%%%%%%%%%%%%%%%%%%%%%%%%%%%%%%%%%%%%%%%%%%%%%%%%%%%%%

James F. Glazebrook.\\
 Department of Mathematics and Computer
Science \\
 Eastern Illinois University \\
600  Lincoln Ave., Charleston, IL 61920--3099 USA \\
jfglazebrook@eiu.edu
\\ (Adjunct Faculty)
\\ Department of Mathematics \\ University of Illinois at
Urbana--Champaign\\ Urbana, IL 61801, USA\\

Alberto Verjovsky\\Instituo de Matem\'{a}ticas\\
Universidad Aut\'{o}noma de M\'{e}xico\\
Av. Universidad s/n, Lomas de Chamilpa\\
Cuernavaca CP 62210, Morelos, Mexico\\
alberto@matcuer.unam.mx

\end{document}